\documentclass[reqno]{amsart}

\usepackage[english]{babel}
\usepackage{amsthm, latexsym, enumerate, gastex, amssymb,url,listings}

\usepackage[mathcal]{eucal}
\usepackage{graphicx,url}

\copyrightinfo{2013}{Simone Ugolini}

\DeclareMathOperator{\ord}{ord}

\renewcommand{\phi}[0]{\varphi}
\renewcommand{\theta}[0]{\vartheta}
\renewcommand{\epsilon}[0]{\varepsilon}

\newcommand{\N}{\text{$\mathbf{N}$}}
\newcommand{\Z}{\text{$\mathbf{Z}$}}

\newtheorem{theorem}{Theorem}[section]
\newtheorem{lemma}[theorem]{Lemma}

\theoremstyle{definition}

\newtheorem{example}[theorem]{Example}

\theoremstyle{remark}

\numberwithin{equation}{section}

\begin{document}

\bibliographystyle{amsplain}

\date{}

\title[]
{A graph-based approach to repeating decimals}

\author{S.~Ugolini}
\email{sugolini@gmail.com} 

\begin{abstract}
In this paper we deal with a classical problem in elementary number theory, namely repeating decimals. We show how the digits of the period of the decimal representation of any fraction $\frac{k}{m}$, where $k$ and $m$ are positive integers arbitrarily chosen, can be obtained relying upon the graphs associated with the iteration of a certain map over the finite set $\{0, 1, \dots, 10n-2 \}$ for a suitable integer $n$, which depends on $m$. In the last section of the paper we generalize the results to any arbitrary choice of the base $B \geq 2$ for the representation of the fraction $\frac{k}{m}$. 
\end{abstract}

\maketitle

\section{Introduction}
Repeating decimals are a classical topic in elementary number theory. Many investigations can be found in the literature about such a topic. Among others we would like to mention \cite{kap} and \cite{rao}.
 
In this paper we introduce a graph-based procedure to get the digits of the period of the repeating decimal representations of the fractions
\begin{equation}\label{gen_frac}
\frac{k}{m},
\end{equation}
where $k$ and $m$ are arbitrarily chosen positive integers. In general we will always assume that $k < m$. Indeed, if $k \geq m$, then $\frac{k}{m} = k' + \frac{k''}{m}$ for some integers $k', k''$ with $1 \leq k'' < m$.

As we will notice in Sections \ref{km10m1} and \ref{km_arb}, it is sufficient to consider the fractions
\begin{equation}\label{k10n}
\frac{k}{10n-1},
\end{equation}
where $k$ and $n$ are positive integers such that $1 \leq k < 10n-1$. For this reason, in Section \ref{k10n1} we will just deal with fractions as (\ref{k10n}). The fractions as (\ref{k10n}) are quite manageable, since the sequence of the digits of the period of their decimal expansion is  equal to the sequence of the rightmost digits of the remainders (see Theorem  \ref{theo_proc}). 

In the final section of the paper we show how all our results read in a generic base $B \geq 2$ not necessarily equal to $10$.

\section{The graph-based approach to the fractions $\frac{k}{10n-1}$}\label{k10n1}
For any positive integer $n$ we define the set 
\begin{equation*}
R_n = \{0, 1, \dots, 10n-2 \}
\end{equation*}
formed by the $10n-1$ nonnegative integers smaller than $10n-1$. We notice that $R_n$  forms a set of representatives of the residue classes of $\Z / (10n-1) \Z$, namely $R_n$ is the set of all possible (nonnegative) remainders of the Euclidean division by $10n-1$ on the integers. 

Consider now the map
\begin{displaymath}
\begin{array}{rccc}
f_n : & R_n & \to & R_n\\
 & x & \mapsto & r,
\end{array}
\end{displaymath}
where $r$ is the remainder of the Euclidean division of $10x$ by $10n-1$, namely the nonnegative integer $r$ such that
\begin{displaymath}
\begin{cases}
10 x = q (10n-1) + r\\
0 \leq r < 10n-1,
\end{cases}
\end{displaymath}
for some integer $q$.

The following holds for the iterates of the map $f_n$.

\begin{lemma}\label{fnit}
If $i \in \N$ and $x \in R_n$, then $f_n^i(x)$ is equal to the integer $r_i$, being $r_i$ the remainder of the division of $10^ix$ by $10n-1$, namely
\begin{displaymath}
\begin{cases}
10^i  x = q_i (10n-1) + r_i\\
0 \leq r_i < 10n-1
\end{cases}
\end{displaymath}
for some integer $q_i$.
\end{lemma}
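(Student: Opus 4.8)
The plan is to argue by induction on $i$. For the base case, if $\N$ is taken to include $0$, then $f_n^0(x) = x$ and, since $x \in R_n$ satisfies $0 \le x < 10n-1$, the integer $x$ is itself the remainder of $10^0 x = x$ upon division by $10n-1$ (with $q_0 = 0$); if instead one starts the induction at $i = 1$, the assertion is precisely the definition of $f_n$. Either way the base case is immediate.

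For the inductive step, I would suppose the claim holds for some $i$, so that $f_n^i(x) = r_i$ with $10^i x = q_i(10n-1) + r_i$ and $0 \le r_i < 10n-1$. By definition of the iterate, $f_n^{i+1}(x) = f_n(f_n^i(x)) = f_n(r_i)$, which by definition of $f_n$ is the unique integer $r$ with $10 r_i = q(10n-1) + r$ and $0 \le r < 10n-1$ for some integer $q$. Multiplying the inductive hypothesis by $10$ and substituting, one gets
\begin{displaymath}
10^{i+1} x = 10 q_i (10n-1) + 10 r_i = (10 q_i + q)(10n-1) + r,
\end{displaymath}
with $0 \le r < 10n-1$.

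It then remains to invoke the uniqueness of the quotient and remainder in the Euclidean division of $10^{i+1} x$ by $10n-1$: the displayed identity exhibits $r$ as that remainder, i.e. $r_{i+1} = r$ and $q_{i+1} = 10 q_i + q$. Hence $f_n^{i+1}(x) = r = r_{i+1}$, which closes the induction. There is no genuine obstacle here; the only point requiring attention is to carry the bound $0 \le r < 10n-1$ along at each step, since it is exactly this bound that legitimizes the appeal to uniqueness of the Euclidean division.
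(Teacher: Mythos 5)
Your proof is correct and follows essentially the same route as the paper's: induction on $i$, with the inductive step obtained by multiplying the inductive hypothesis by $10$ and recombining the quotients, then appealing to uniqueness of the Euclidean division (which the paper uses implicitly). The only cosmetic difference is that you step from $i$ to $i+1$ while the paper steps from $i-1$ to $i$.
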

\begin{proof}
We prove the claim by induction on $i \in \N$.

If $i=0$, then $f_n^i (x) = x$. Since $10^0 x=x$ and
\begin{displaymath}
\begin{cases}
x = 0 \cdot (10n-1) + x\\
0 \leq x < 10n-1,
\end{cases}
\end{displaymath}
the assertion is proved. 

If $i > 0$, then $f_n^i (x) = f_n (f_n^{i-1} (x))$, where $f_n^{i-1} (x) = r_{i-1}$ by inductive hypothesis, namely
\begin{displaymath}
\begin{cases}
10^{i-1} x = q_{i-1} (10n-1) + r_{i-1}\\
0 \leq r_{i-1} < 10n-1
\end{cases}
\end{displaymath}
for some integer $q_{i-1}$.

Let $f_n(r_{i-1}) = r$. Then,
\begin{displaymath}
\begin{cases}
10 r_{i-1} = q (10n-1) + r\\
0 \leq r < 10n-1
\end{cases}
\end{displaymath}
for some integer $q$. 

Since $10 r_{i-1} = 10^i x - 10 q_{i-1} (10n-1)$, we have that
\begin{displaymath}
\begin{cases}
10^{i} x = (q+10q_{i-1}) (10n-1) + r\\
0 \leq r < 10n-1.
\end{cases}
\end{displaymath}
We remind that $f_n^i(x) = f_n(r_{i-1}) = r$. Hence, $f_n^i (x)$ is equal to the remainder of the division of $10^ix$ by $10n-1$ and we are done.\end{proof}

We can visualize the dynamics of the map $f_n$ on $R_n$ by means of a graph $G_n$, whose vertices are labelled by the elements of $R_n$ and where an arrow joins a vertex $i$ to a vertex $j$ provided that $j = f_n(i)$. Any element $i \in G_n$ is $f_n$-periodic, namely $f_n^s(i)=i$ for some positive integer $s$. In fact, $[10]$ is invertible in $\Z / (10n-1) \Z$. Therefore, the smallest among the positive integers $s$ such that $f_n^s(i)=i$ is $t$, where
\begin{displaymath}
t = \begin{cases}
1 & \text{if $i=0$,}\\
\ord_{d_i} ([10]) & \text{otherwise,}
\end{cases}
\end{displaymath}
being $\ord_{d_i} ([10])$ the multiplicative order of $[10]$ in $U(\Z / d_i \Z)$, where 
\begin{equation*}
d_i = \frac{10n-1}{\gcd(10n-1,i)}.
\end{equation*}

Let $\phi$ be the Euler's totient function. We remind that
\begin{equation*}
\sum_{\substack{d \in \N \\ d \mid (10n-1)}} \phi(d) = 10n-1.
\end{equation*}
For any positive divisor $d$ of $10n-1$ there are $\phi(d)$ elements $i \in R_n$ such that $d = \frac{10n-1}{\gcd(10n-1,i)}$. Such elements give rise to exactly $\frac{\phi(d)}{\ord_d ([10])}$ cycles of length $\ord_d ([10])$. 

\begin{example}\label{G_4}
In this example we construct the graph $G_{4}$, namely the graph associated with $f_4$ over the set $R_4$.

\begin{center}
    \unitlength=2pt
    \begin{picture}(160, 50)(-80,-25)
    \gasset{Nw=6,Nh=6,Nmr=3.0,curvedepth=0}
    \thinlines
    \tiny
  
    \node(N1)(-40,0){$1$}
    \node(N2)(-50,17.32){$10$}
    \node(N3)(-70,17.32){$22$}
    \node(N4)(-80,0){$25$}
    \node(N5)(-70,-17.32){$16$}
    \node(N6)(-50,-17.32){$4$}
    
    \node(N11)(20,0){$2$}
    \node(N12)(10,17.32){$20$}
    \node(N13)(-10,17.32){$5$}
    \node(N14)(-20,0){$11$}
    \node(N15)(-10,-17.32){$32$}
    \node(N16)(10,-17.32){$8$}
    
    \node(N21)(80,0){$3$}
    \node(N22)(70,17.32){$30$}
    \node(N23)(50,17.32){$27$}
    \node(N24)(40,0){$36$}
    \node(N25)(50,-17.32){$9$}
    \node(N26)(70,-17.32){$12$}

    \drawedge(N1,N2){}
    \drawedge(N2,N3){}
    \drawedge(N3,N4){}
    \drawedge(N4,N5){}
    \drawedge(N5,N6){}
    
    \drawedge(N11,N12){}
    \drawedge(N12,N13){}
    \drawedge(N13,N14){}
    \drawedge(N14,N15){}
    \drawedge(N15,N16){}
    
    \drawedge(N21,N22){}
    \drawedge(N22,N23){}
    \drawedge(N23,N24){}
    \drawedge(N24,N25){}
    \drawedge(N25,N26){}

    \drawedge(N6,N1){}
    \drawedge(N16,N11){}
    \drawedge(N26,N21){}

\end{picture}
\end{center}

\begin{center}
    \unitlength=2pt
    \begin{picture}(160, 50)(-80,-25)
    \gasset{Nw=6,Nh=6,Nmr=3.0,curvedepth=0}
    \thinlines
    \tiny
  
    \node(N1)(-40,0){$6$}
    \node(N2)(-50,17.32){$21$}
    \node(N3)(-70,17.32){$15$}
    \node(N4)(-80,0){$33$}
    \node(N5)(-70,-17.32){$18$}
    \node(N6)(-50,-17.32){$24$}
    
    \node(N11)(20,0){$7$}
    \node(N12)(10,17.32){$31$}
    \node(N13)(-10,17.32){$37$}
    \node(N14)(-20,0){$19$}
    \node(N15)(-10,-17.32){$34$}
    \node(N16)(10,-17.32){$28$}
    
    \node(N21)(80,0){$14$}
    \node(N22)(70,17.32){$23$}
    \node(N23)(50,17.32){$35$}
    \node(N24)(40,0){$38$}
    \node(N25)(50,-17.32){$29$}
    \node(N26)(70,-17.32){$17$}

    \drawedge(N1,N2){}
    \drawedge(N2,N3){}
    \drawedge(N3,N4){}
    \drawedge(N4,N5){}
    \drawedge(N5,N6){}
    
    \drawedge(N11,N12){}
    \drawedge(N12,N13){}
    \drawedge(N13,N14){}
    \drawedge(N14,N15){}
    \drawedge(N15,N16){}
    
    \drawedge(N21,N22){}
    \drawedge(N22,N23){}
    \drawedge(N23,N24){}
    \drawedge(N24,N25){}
    \drawedge(N25,N26){}

    \drawedge(N6,N1){}
    \drawedge(N16,N11){}
    \drawedge(N26,N21){}

\end{picture}
\end{center}

\begin{center}
    \unitlength=2pt
    \begin{picture}(120, 30)(0,-20)
    \gasset{Nw=6,Nh=6,Nmr=3.0,curvedepth=0}
    \thinlines
    \tiny
  
    \node(N61)(10,0){$0$}
    \node(N71)(60,0){$13$}
    \node(N81)(110,0){$26$}
             
	\drawloop[loopangle=-90](N61){}
	\drawloop[loopangle=-90](N71){}
	\drawloop[loopangle=-90](N81){}
\end{picture}
\end{center}
We notice that in $R_4$ there are $39$ elements. The set of positive integer divisors of $39$ is $\{1, 3, 13, 39 \}$. In the following table we summarize some relevant data for our graph.
\begin{center}
\begin{tabular}{|c|c|c|c|}
\hline 
$d$ & $\ord_d ([10])$ & $\phi(d)$ & $\phi(d) / \ord_d ([10])$ \\ 
\hline 
3 & 1 & 2 & 2\\ 
\hline 
13 & 6 & 12 & 2 \\ 
\hline 
39 & 6 & 24 & 4\\ 
\hline  
\end{tabular} 
\end{center}
From the table we deduce the following.
\begin{itemize}
\item There are $2$ elements $i \in R_4$ such that $d = \frac{39}{\gcd(39, i)} = 3$. Such elements give rise  to $2$ cycles of length $1$ each.
\item There are $12$ elements $i \in R_4$ such that $d = \frac{39}{\gcd(39, i)} = 13$. Such elements give rise  to $2$ cycles of length $6$ each.
\item There are $24$ elements $i \in R_4$ such that $d = \frac{39}{\gcd(39, i)} = 39$. Such elements give rise  to $4$ cycles of length $6$ each.
\end{itemize}
Finally, if $d=1$, then there is just $1$ element $i \in R_4$ such that $d = \frac{39}{\gcd(39,i)} = 1$, namely $i=0$. Such element gives rise to $1$ cycle of length $1$. 
\end{example}

\begin{example}\label{G_12}
In this example we construct the graph $G_{12}$, namely the graph associated with $f_{12}$ over the set $R_{12}$.

\begin{center}
    \unitlength=1.8pt
    \begin{picture}(160, 20)(0,-15)
    \gasset{Nw=6,Nh=6,Nmr=3.0,curvedepth=0}
    \thinlines
    \tiny
	\node(C1)(80,0){$0$}

    \gasset{curvedepth=10.0}

	\drawloop[loopangle=-90](C1){}
	\end{picture}
\end{center}

\begin{center}
    \unitlength=2pt
    \begin{picture}(160, 40)(-40,-20)
    \gasset{Nw=6,Nh=6,Nmr=3.0,curvedepth=0}
    \thinlines
    \tiny
       
    \node(B1)(60,0){$17$}
    \node(B2)(50,17.32){$51$}
    \node(B3)(30,17.32){$34$}
    \node(B4)(20,0){$102$}
    \node(B5)(30,-17.32){$68$}
    \node(B6)(50,-17.32){$85$}

    \drawedge(B1,B2){}
    \drawedge(B2,B3){}
    \drawedge(B3,B4){}
    \drawedge(B4,B5){}
    \drawedge(B5,B6){}
    \drawedge(B6,B1){}

\end{picture}
\end{center}
	
\begin{center}
    \unitlength=2pt
    \begin{picture}(160, 170)(-80,-85)
    \gasset{Nw=6,Nh=6,Nmr=3.0,curvedepth=0}
    \thinlines
    \tiny
  
    \node(A1)(80,0){$1$}
    \node(A2)(79.3,10.44){$10$}
    \node(A3)(77.28,20.71){$100$}
    \node(A4)(73.9,30.62){$48$}
    \node(A5)(69.28,40){$4$}
    \node(A6)(63.47,48.77){$40$}
    \node(A7)(56.57,56.57){$43$}
    \node(A8)(48.77,63.47){$73$}
    \node(A9)(40,69.28){$16$}
    \node(A10)(30.62,73.9){$41$}
    \node(A11)(20.71,77.28){$53$}
    \node(A12)(10.44,79.3){$54$}
    \node(A24)(-79.3,10.44){$5$}
    \node(A23)(-77.28,20.71){$60$}
    \node(A22)(-73.9,30.62){$6$}
    \node(A21)(-69.28,40){$72$}
    \node(A20)(-63.47,48.77){$31$}
    \node(A19)(-56.57,56.57){$15$}
    \node(A18)(-48.77,63.47){$61$}
    \node(A17)(-40,69.28){$18$}
    \node(A16)(-30.62,73.9){$97$}
    \node(A15)(-20.71,77.28){$93$}
    \node(A14)(-10.44,79.3){$45$}
    \node(A13)(0,80){$64$}
    \node(A25)(-80,0){$50$}
    \node(A26)(-79.3,-10.44){$24$}
    \node(A27)(-77.28,-20.71){$2$}
    \node(A28)(-73.9,-30.62){$20$}
    \node(A29)(-69.28,-40){$81$}
    \node(A30)(-63.47,-48.77){$96$}
    \node(A31)(-56.57,-56.57){$8$}
    \node(A32)(-48.77,-63.47){$80$}
    \node(A33)(-40,-69.28){$86$}
    \node(A34)(-30.62,-73.9){$27$}
    \node(A35)(-20.71,-77.28){$32$}
    \node(A36)(-10.44,-79.3){$82$}
    \node(A48)(79.3,-10.44){$12$}
    \node(A47)(77.28,-20.71){$25$}
    \node(A46)(73.9,-30.62){$62$}
    \node(A45)(69.28,-40){$30$}
    \node(A44)(63.47,-48.77){$3$}
    \node(A43)(56.57,-56.57){$36$}
    \node(A42)(48.77,-63.47){$75$}
    \node(A41)(40,-69.28){$67$}
    \node(A40)(30.62,-73.9){$90$}
    \node(A39)(20.71,-77.28){$9$}
    \node(A38)(10.44,-79.3){$108$}
    \node(A37)(0,-80){$106$}

    \drawedge(A1,A2){}
    \drawedge(A2,A3){}
    \drawedge(A3,A4){}
    \drawedge(A4,A5){}
    \drawedge(A5,A6){}
    \drawedge(A6,A7){}
    \drawedge(A7,A8){}
    \drawedge(A8,A9){}
    \drawedge(A9,A10){}
    \drawedge(A10,A11){}
    \drawedge(A11,A12){}
    \drawedge(A12,A13){}
    \drawedge(A13,A14){}
    \drawedge(A14,A15){}
    \drawedge(A15,A16){}
    \drawedge(A16,A17){}
    \drawedge(A17,A18){}
    \drawedge(A18,A19){}
    \drawedge(A19,A20){}
    \drawedge(A20,A21){}
    \drawedge(A21,A22){}
    \drawedge(A22,A23){}
    \drawedge(A23,A24){}
    \drawedge(A24,A25){}
    \drawedge(A25,A26){}
    \drawedge(A26,A27){}
    \drawedge(A27,A28){}
    \drawedge(A28,A29){}
    \drawedge(A29,A30){}
    \drawedge(A30,A31){}
    \drawedge(A31,A32){}
    \drawedge(A32,A33){}
    \drawedge(A33,A34){}
    \drawedge(A34,A35){}
    \drawedge(A35,A36){}
    \drawedge(A36,A37){}
    \drawedge(A37,A38){}
    \drawedge(A38,A39){}
    \drawedge(A39,A40){}
    \drawedge(A40,A41){}
    \drawedge(A41,A42){}
    \drawedge(A42,A43){}
    \drawedge(A43,A44){}
    \drawedge(A44,A45){}
    \drawedge(A45,A46){}
    \drawedge(A46,A47){}
    \drawedge(A47,A48){}
    \drawedge(A48,A1){}
\end{picture}
\end{center}

\begin{center}
    \unitlength=2pt
    \begin{picture}(160, 170)(-80,-85)
    \gasset{Nw=6,Nh=6,Nmr=3.0,curvedepth=0}
    \thinlines
    \tiny
  
    \node(A1)(80,0){$11$}
    \node(A2)(79.3,10.44){$110$}
    \node(A3)(77.28,20.71){$29$}
    \node(A4)(73.9,30.62){$52$}
    \node(A5)(69.28,40){$44$}
    \node(A6)(63.47,48.77){$83$}
    \node(A7)(56.57,56.57){$116$}
    \node(A8)(48.77,63.47){$89$}
    \node(A9)(40,69.28){$57$}
    \node(A10)(30.62,73.9){$94$}
    \node(A11)(20.71,77.28){$107$}
    \node(A12)(10.44,79.3){$118$}
    \node(A24)(-79.3,10.44){$55$}
    \node(A23)(-77.28,20.71){$65$}
    \node(A22)(-73.9,30.62){$66$}
    \node(A21)(-69.28,40){$78$}
    \node(A20)(-63.47,48.77){$103$}
    \node(A19)(-56.57,56.57){$46$}
    \node(A18)(-48.77,63.47){$76$}
    \node(A17)(-40,69.28){$79$}
    \node(A16)(-30.62,73.9){$115$}
    \node(A15)(-20.71,77.28){$71$}
    \node(A14)(-10.44,79.3){$19$}
    \node(A13)(0,80){$109$}
    \node(A25)(-80,0){$74$}
    \node(A26)(-79.3,-10.44){$26$}
    \node(A27)(-77.28,-20.71){$22$}
    \node(A28)(-73.9,-30.62){$101$}
    \node(A29)(-69.28,-40){$58$}
    \node(A30)(-63.47,-48.77){$104$}
    \node(A31)(-56.57,-56.57){$88$}
    \node(A32)(-48.77,-63.47){$47$}
    \node(A33)(-40,-69.28){$113$}
    \node(A34)(-30.62,-73.9){$59$}
    \node(A35)(-20.71,-77.28){$114$}
    \node(A36)(-10.44,-79.3){$69$}
    \node(A48)(79.3,-10.44){$13$}
    \node(A47)(77.28,-20.71){$37$}
    \node(A46)(73.9,-30.62){$87$}
    \node(A45)(69.28,-40){$92$}
    \node(A44)(63.47,-48.77){$33$}
    \node(A43)(56.57,-56.57){$39$}
    \node(A42)(48.77,-63.47){$111$}
    \node(A41)(40,-69.28){$23$}
    \node(A40)(30.62,-73.9){$38$}
    \node(A39)(20.71,-77.28){$99$}
    \node(A38)(10.44,-79.3){$117$}
    \node(A37)(0,-80){$95$}

    \drawedge(A1,A2){}
    \drawedge(A2,A3){}
    \drawedge(A3,A4){}
    \drawedge(A4,A5){}
    \drawedge(A5,A6){}
    \drawedge(A6,A7){}
    \drawedge(A7,A8){}
    \drawedge(A8,A9){}
    \drawedge(A9,A10){}
    \drawedge(A10,A11){}
    \drawedge(A11,A12){}
    \drawedge(A12,A13){}
    \drawedge(A13,A14){}
    \drawedge(A14,A15){}
    \drawedge(A15,A16){}
    \drawedge(A16,A17){}
    \drawedge(A17,A18){}
    \drawedge(A18,A19){}
    \drawedge(A19,A20){}
    \drawedge(A20,A21){}
    \drawedge(A21,A22){}
    \drawedge(A22,A23){}
    \drawedge(A23,A24){}
    \drawedge(A24,A25){}
    \drawedge(A25,A26){}
    \drawedge(A26,A27){}
    \drawedge(A27,A28){}
    \drawedge(A28,A29){}
    \drawedge(A29,A30){}
    \drawedge(A30,A31){}
    \drawedge(A31,A32){}
    \drawedge(A32,A33){}
    \drawedge(A33,A34){}
    \drawedge(A34,A35){}
    \drawedge(A35,A36){}
    \drawedge(A36,A37){}
    \drawedge(A37,A38){}
    \drawedge(A38,A39){}
    \drawedge(A39,A40){}
    \drawedge(A40,A41){}
    \drawedge(A41,A42){}
    \drawedge(A42,A43){}
    \drawedge(A43,A44){}
    \drawedge(A44,A45){}
    \drawedge(A45,A46){}
    \drawedge(A46,A47){}
    \drawedge(A47,A48){}
    \drawedge(A48,A1){}
\end{picture}
\end{center}

\begin{center}
    \unitlength=2pt
    \begin{picture}(160, 100)(-120,-50)
    \gasset{Nw=6,Nh=6,Nmr=3.0,curvedepth=0}
    \thinlines
    \tiny
  
    \node(A1)(0,0){$7$}
    \node(A2)(-3.05,15.31){$70$}
    \node(A3)(-11.72,28.28){$105$}
    \node(A4)(-24.96,36.96){$98$}
    \node(A5)(-40,40){$28$}
    \node(A6)(-55.31,36.96){$42$}
    \node(A7)(-68.28,28.28){$63$}
    \node(A8)(-76.96,15.31){$35$}
    \node(A9)(-80,0){$112$}
    \node(A10)(-76.96,-15.31){$49$}
    \node(A11)(-68.28,-28.28){$14$}
    \node(A12)(-55.31,-36.96){$21$}
    \node(A13)(-40,-40){$91$}
    \node(A14)(-24.96,-36.96){$77$}
    \node(A15)(-11.72,-28.28){$56$}
    \node(A16)(-3.05,-15.31){$84$}

    \drawedge(A1,A2){}
    \drawedge(A2,A3){}
    \drawedge(A3,A4){}
    \drawedge(A4,A5){}
    \drawedge(A5,A6){}
    \drawedge(A6,A7){}
    \drawedge(A7,A8){}
    \drawedge(A8,A9){}
    \drawedge(A9,A10){}
    \drawedge(A10,A11){}
    \drawedge(A11,A12){}
    \drawedge(A12,A13){}
    \drawedge(A13,A14){}
    \drawedge(A14,A15){}
    \drawedge(A15,A16){}
    \drawedge(A16,A1){}

\end{picture}
\end{center}

We notice that in $R_{12}$ there are $119$ elements. The set of positive integer divisors of $119$ is $\{1, 7, 17, 119 \}$. In the following table we summarize some relevant data for our graph.
\begin{center}
\begin{tabular}{|c|c|c|c|}
\hline 
$d$ & $\ord_d ([10])$ & $\phi(d)$ & $\phi(d) / \ord_d ([10])$ \\ 
\hline 
7 & 6 & 6 & 1\\ 
\hline 
17 & 16 & 16 & 1 \\ 
\hline 
119 & 48 & 96 & 2\\ 
\hline  
\end{tabular} 
\end{center}
From the table we deduce the following.
\begin{itemize}
\item There are $6$ elements $i \in R_{12}$ such that $d = \frac{119}{\gcd(119, i)} = 7$. Such elements give rise  to $1$ cycle of length $6$.
\item There are $16$ elements $i \in R_{12}$ such that $d = \frac{119}{\gcd(119, i)} = 17$. Such elements give rise  to $1$ cycle of length $6$.
\item There are $96$ elements $i \in R_{12}$ such that $d = \frac{119}{\gcd(119, i)} = 119$. Such elements give rise  to $2$ cycles of length $48$ each.
\end{itemize}
Finally, if $d=1$, then there is just $1$ element $i \in R_{119}$ such that $d = \frac{119}{\gcd(119,i)} = 1$, namely $i=0$. Such element gives rise to $1$ cycle of length $1$. 
\end{example}

Consider now the map
\begin{equation*}
u : \N \to \{0, \dots, 9 \},
\end{equation*}
which takes any nonnegative integer $d$, represented in base $10$, to its rightmost digit. For example, $u(127) = 7$.
The following properties hold for the map $u$.
\begin{lemma}\label{uprop}
Let $a$ and $b$ be two positive integers. Then:
\begin{itemize}
\item $u(a+b) = u (u(a)+u(b))$;
\item $u(a \cdot b) = u (u(a) \cdot u(b))$.
\end{itemize}
\end{lemma}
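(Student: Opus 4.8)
The plan is to recognize the map $u$ as nothing other than reduction modulo $10$. Concretely, for every nonnegative integer $d$ the Euclidean division of $d$ by $10$ gives $d = 10\lfloor d/10\rfloor + u(d)$ with $0 \leq u(d) \leq 9$, so that $u(d)$ is precisely the unique element of $\{0,\dots,9\}$ congruent to $d$ modulo $10$; in particular $d \equiv u(d) \pmod{10}$. This observation, together with the uniqueness of the quotient and remainder, also makes clear that $u$ is well defined.

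Granting this, both assertions follow from the fact that congruence modulo $10$ is preserved under addition and multiplication. First I would note that, since $a \equiv u(a)$ and $b \equiv u(b)$ modulo $10$, we get $a+b \equiv u(a)+u(b) \pmod{10}$ and $a\cdot b \equiv u(a)\cdot u(b) \pmod{10}$. Since $u$ sends an integer to the unique representative of its residue class modulo $10$ lying in $\{0,\dots,9\}$, two integers with the same class modulo $10$ have the same image under $u$. Applying this to the two congruences just displayed yields $u(a+b) = u(u(a)+u(b))$ and $u(a\cdot b) = u(u(a)\cdot u(b))$, which are exactly the two claimed identities.

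The argument has essentially no obstacle: the only point requiring a word of care is the identification of $u$ with the canonical ``representative mod $10$'' map, after which one simply invokes the ring-homomorphism property of the reduction $\Z \to \Z/10\Z$. Alternatively, one could argue by hand, writing $a = 10s + u(a)$ and $b = 10t + u(b)$, expanding $a+b$ and $ab$, and checking that the terms divisible by $10$ do not affect the rightmost digit; but the modular formulation is cleaner and has the advantage of transferring verbatim, with $10$ replaced by $B$, to the generic base considered in the final section of the paper.
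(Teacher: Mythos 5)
Your proof is correct. The route you take is genuinely, if mildly, different from the paper's: you observe that $u$ is the canonical representative map for reduction modulo $10$, i.e.\ $d \equiv u(d) \pmod{10}$ with $u(d) \in \{0,\dots,9\}$, and then deduce both identities at once from the fact that reduction $\Z \to \Z/10\Z$ respects addition and multiplication. The paper instead argues entirely by hand: it writes $a = a_1 \cdot 10 + a_0$ and $b = b_1 \cdot 10 + b_0$, expands $a+b = (a_1+b_1)\cdot 10 + (a_0+b_0)$ and $a\cdot b = (a_1 b_0 + a_0 b_1 + a_1 b_1 \cdot 10)\cdot 10 + a_0 b_0$, and observes that the terms divisible by $10$ do not influence the rightmost digit --- exactly the ``alternative'' you sketch in your last paragraph. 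What your formulation buys is brevity and robustness: the homomorphism property absorbs the small bookkeeping the paper must do implicitly (e.g.\ that $a_0 + b_0$ may exceed $9$, so one still needs $u\bigl((a_1+b_1)\cdot 10 + (a_0+b_0)\bigr) = u(a_0+b_0)$, which is cleanest to justify modularly), and, as you note, it transfers verbatim to the base-$B$ analogue (Lemma~\ref{upropB}). What the paper's explicit expansion buys is self-containedness at the most elementary level, avoiding any appeal to the language of congruences, which fits the expository tone of the article. Both arguments ultimately rest on the same Euclidean division $d = 10\lfloor d/10\rfloor + u(d)$, so the difference is one of packaging rather than substance.
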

\begin{proof}
If we perform the Euclidean division of $a$ and $b$ by $10$, then there exist four integers $a_0, a_1, b_0$ and $b_1$ such that
\begin{displaymath}
\begin{cases}
a = a_1 \cdot 10 + a_0,\\
0 \leq a_0 \leq 9,
\end{cases}
\quad
\begin{cases}
b = b_1 \cdot 10 + b_0,\\
0 \leq b_0 \leq 9.
\end{cases}
\end{displaymath}
Hence, $u(a) = a_0$, $u(b)=b_0$ and  $a+b=(a_1+b_1) \cdot 10 + (a_0+b_0)$. Since $u((a_1+b_1) \cdot 10) = 0$, we have that the integer $(a_1+b_1) \cdot 10$ has no influence on the last digit of $a+b$. Therefore, $u(a+b) = u(a_0+b_0) = u(u(a) +u(b))$.

As regards the second property, we have that 
\begin{equation*}
u(a \cdot b) = u((a_1 b_0 + a_0 b_1 + a_1 b_1 \cdot 10) \cdot 10 + a_0 b_0).
\end{equation*}
Since $u((a_1 b_0 + a_0 b_1 + a_1 b_1 \cdot 10) \cdot 10) = 0$, we have that $u(a \cdot b) = u(a_0 \cdot b_0)$. Since $a_0 = u(a)$ and $b_0 = u(b)$, we get the result.
\end{proof}

We are now in a position to prove the following theorem.

\begin{theorem}\label{theo_proc}
Let $k$ and $n$ be two positive integers such that $1 \leq k < 10n-1$ and suppose that $ \frac{k}{10n-1} = 0. \overline{a_1 \dots a_l}$, where $l$ is the length of the period of the decimal expansion $0. \overline{a_1 \dots a_l}$. 

For any $i \in \{1, \dots, l \}$ define the positive integers $\tilde{q}_i, \tilde{r}_i$ such that
\begin{displaymath}
\begin{cases}
10^i \cdot k = \tilde{q}_i (10n-1) + \tilde{r}_i\\
0 \leq \tilde{r}_i < 10n-1.
\end{cases}
\end{displaymath}
Then, for any $i \in  \{1, \dots, l \}$:
\begin{itemize}
\item $u(\tilde{q}_i) = a_i$;
\item $u(\tilde{q}_i) = u(\tilde{r}_i)$.
\end{itemize} 
\end{theorem}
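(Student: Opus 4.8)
The plan is to pin down the integer $\tilde q_i$ explicitly as the integer whose base‑$10$ digit string is $a_1 a_2\dots a_i$, and then to settle the second bullet by a congruence modulo $10$. Throughout I extend the period digits periodically, writing $a_{j+l}=a_j$ for all $j\ge 1$, so that $0.\overline{a_1\dots a_l}=\sum_{j\ge 1}a_j 10^{-j}$; note also that, since $10n-1$ is coprime to $10$ and $1\le k<10n-1$, this expansion is genuinely non‑terminating and purely periodic, so there is no ambiguity in the digit string.

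First I would multiply the identity $\frac{k}{10n-1}=\sum_{j\ge 1}a_j 10^{-j}$ by $10^i$ and split the sum at $j=i$:
\[
\frac{10^i k}{10n-1}\;=\;\underbrace{\sum_{j=1}^{i}a_j\,10^{\,i-j}}_{=:Q_i\in\N}\;+\;\sum_{m\ge 1}a_{i+m}\,10^{-m}.
\]
The trailing sum lies in $[0,1)$: it is at most $\sum_{m\ge 1}9\cdot 10^{-m}=1$, with equality only if $a_j=9$ for all large $j$, hence (by periodicity) $\frac{k}{10n-1}=0.\overline{9}=1$, contradicting $k<10n-1$. On the other hand the defining relation of the theorem gives $\frac{10^i k}{10n-1}=\tilde q_i+\frac{\tilde r_i}{10n-1}$ with $0\le \frac{\tilde r_i}{10n-1}<1$. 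Since a real number decomposes uniquely as an integer plus an element of $[0,1)$, we conclude $\tilde q_i=Q_i=\sum_{j=1}^{i}a_j 10^{i-j}$. In particular $\tilde q_i=10\,\tilde q_{i-1}+a_i$ with $0\le a_i\le 9$, so the Euclidean division of $\tilde q_i$ by $10$ has remainder $a_i$; that is, $u(\tilde q_i)=a_i$, which is the first bullet.

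For the second bullet I would reduce $10^i k=\tilde q_i(10n-1)+\tilde r_i$ modulo $10$. Since $i\ge 1$ we have $10^i k\equiv 0$, and $10n-1\equiv -1\pmod{10}$, whence $\tilde r_i\equiv \tilde q_i\pmod{10}$; as $u(a)$ is by definition the residue of $a$ modulo $10$ chosen in $\{0,\dots,9\}$, this forces $u(\tilde r_i)=u(\tilde q_i)$. (If one prefers to stay inside the formalism of Lemma~\ref{uprop}, the same follows from $u(10^i k)=0$, $u(10n-1)=9$, and the two identities there, using $9\,u(\tilde q_i)+u(\tilde r_i)\equiv -u(\tilde q_i)+u(\tilde r_i)\pmod{10}$.) Finally, since by Lemma~\ref{fnit} the integer $\tilde r_i$ is exactly $f_n^i(k)$, the two bullets together say that $a_i=u(f_n^i(k))$, which is the form needed to read the period off the graph $G_n$.

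The only genuinely delicate point is establishing $\tilde q_i=\overline{a_1\dots a_i}$ — i.e.\ that the abstract period digits $a_i$ coincide with the quotient digits produced by dividing $10^i k$ by $10n-1$ — together with the two mild edge cases handled above: leading zeros among the $a_j$ (harmless, since $u(0)=0$) and the exclusion of an all‑nines tail. Everything after that identification is routine bookkeeping.
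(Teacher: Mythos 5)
Your proof is correct and follows essentially the same route as the paper: identify $\tilde q_i$ with the integer whose digit string is $a_1\dots a_i$ via the decomposition $\frac{10^i k}{10n-1}=\tilde q_i+\frac{\tilde r_i}{10n-1}$ with fractional part in $[0,1)$, then deduce $u(\tilde r_i)=u(\tilde q_i)$ from $10^i k\equiv 0$ and $10n-1\equiv -1 \pmod{10}$ (the paper phrases this last step through the identities of Lemma~\ref{uprop}, as you note parenthetically). Your treatment is in fact slightly more careful than the paper's, which simply asserts the shifted expansion $10^i\cdot\frac{k}{10n-1}=a_1\dots a_i.\overline{a_{i+1}\dots}$ without ruling out an all-nines tail.
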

\begin{proof}
For any positive integer $i$ we have that
\begin{equation*}
10^i \cdot \frac{k}{10n-1} = a_1 \dots a_i . \overline{a_{i+i} \dots a_l a_{1} \dots a_{i}}.
\end{equation*} 
Since $10^i \cdot k = \tilde{q}_i (10n-1) + \tilde{r}_i$, we have that
\begin{equation*}
\frac{10^i k}{10n-1} = \frac{\tilde{q}_i (10n-1) + \tilde{r}_i}{10n-1} = \tilde{q}_i + \frac{\tilde{r}_i}{10n-1}.
\end{equation*}
By the fact that $0 \leq \tilde{r}_i < 10n-1$ we deduce that $\frac{\tilde{r}_i}{10n-1} < 1$. Hence, 
\begin{equation*}
\tilde{q}_i = a_1 \dots a_i
\end{equation*}
and $u(\tilde{q}_i) = a_i$.

As regards the second statement, since $u(10^i \cdot k) = u ( \tilde{q}_i (10n-1) + \tilde{r}_i )$ for any $i \in  \{1, \dots, l \}$, we get that $u ( \tilde{q}_i (10n-1) + \tilde{r}_i ) = 0$. 

By the properties of the map $u$ (see Lemma \ref{uprop}) we get the following:
\begin{eqnarray*}
u ( \tilde{q}_i (10n-1) + \tilde{r}_i ) & = & u ( u(\tilde{q}_i) u(10n-1) + u(\tilde{r}_i) ) =  u ( u(\tilde{q}_i) \cdot 9 + u(\tilde{r}_i) )\\
& = & u( u(\tilde{q}_i) \cdot (10-1) + u(\tilde{r}_i))  = u(u(\tilde{q}_i) \cdot 10 - u(\tilde{q}_i) + u(\tilde{r}_i))\\
& = & u(- u(\tilde{q}_i) + u(\tilde{r}_i)).
\end{eqnarray*}
Since 
\begin{equation*}
-9 \leq - u(\tilde{q}_i) + u(\tilde{r}_i) \leq 9,
\end{equation*}
we have that $u(- u(\tilde{q}_i) + u(\tilde{r}_i)) = 0$ if and only if $u(\tilde{q}_i) = u(\tilde{r}_i)$.
\end{proof}

As a consequence of Theorem \ref{theo_proc}, we can retrieve the digits of the period of the decimal expansion of $\frac{k}{10 \cdot n -1}$ from the graph $G_n$ as follows. We consider the cycle of $G_n$ containing the vertex $k$. Then, for any $1 \leq i \leq l$, the digit $a_i$ is equal to the rightmost digit of the $i$-th vertex following $k$ in the cycle.  
\begin{example}\label{ex_1_39}
If we want to compute the digits of the period of the decimal representation of $\frac{1}{39}$ we can rely on the graph $G_4$ (see Example \ref{G_4}). We look for the vertex $1$ in $G_4$ and then we visit all the successors of $1$ in the cycle containing $1$. In doing that we take note of the rightmost digit of any vertex we visit. When we reach the vertex $1$ we take note of $1$ and stop the procedure. In the following picture the bold digits are the aforementioned digits, which, read in the correct order, form the digits of the period of $\frac{1}{39} = 0.\overline{025641}$.
\begin{center}
    \unitlength=2pt
    \begin{picture}(160, 50)(-140,-25)
    \gasset{Nw=6,Nh=6,Nmr=3.0,curvedepth=0}
    \thinlines
    \tiny
  
    \node(N1)(-40,0){$\mathbf{1}$}
    \node(N2)(-50,17.32){$1 \mathbf{0}$}
    \node(N3)(-70,17.32){$2 \mathbf{2}$}
    \node(N4)(-80,0){$2 \mathbf{5}$}
    \node(N5)(-70,-17.32){$1 \mathbf{6}$}
    \node(N6)(-50,-17.32){$\mathbf{4}$}

    \drawedge(N1,N2){}
    \drawedge(N2,N3){}
    \drawedge(N3,N4){}
    \drawedge(N4,N5){}
    \drawedge(N5,N6){}   
    \drawedge(N6,N1){}
\end{picture}
\end{center}
Actually, by means of the same cycle, we can find the decimal representation of the following fractions
\begin{equation*}
\frac{10}{39}, \frac{22}{39}, \frac{25}{39}, \frac{16}{39}, \frac{4}{39}.
\end{equation*}  
For example, the decimal representation of $\frac{25}{39}$ can be retrieved starting from the vertex $25$. Indeed, $\frac{25}{39} = 0, \overline{641025}$.

\end{example}

\subsection{Reverting the edges}
Starting from any graph $G_n$ we can easily construct a new graph, which we call $G_n'$, obtained from $G_n$ reverting the edges. In doing that we construct a new graph, from which we can read off the digits of the period of any $\frac{k}{10n-1}$, for $1 \leq k < 10n-1$, from the right to the left. This is due to the fact that we are visiting the vertices of any cycle in the reverted order. Therefore, if we start from a vertex $k$, then the $i$-th successor vertex of $k$ in $G_n'$ is the remainder of the division of $n^i k$ by $10n-1$, since $[n]$ is the inverse of $[10]$ in $U(\Z / (10n-1) \Z)$. 

Indeed, if $l$ is the multiplicative order of $[10]$ in $U(\Z / (10n-1) \Z)$, then 
\begin{equation*}
[n^i] = [10^{l-i}]
\end{equation*}
for any integer $i$. Suppose that
\begin{displaymath}
\begin{cases}
10^{l-i} k = q_{l-i} (10n-1) + r_{l-i},\\
0 \leq r_{l-i} < 10n-1,
\end{cases}
\end{displaymath}
for some integers $q_{l-i}$ and $r_{l-i}$. Then,
\begin{equation*}
[n^i k] = [10^{l-i} k] = [r_{l-i}].
\end{equation*}
Therefore, $r_{l-i}$ is the remainder of the Euclidean division of $n^i k$ by $10n-1$.

\begin{example}
If we want to compute the digits of the period of the decimal representation of $\frac{1}{39}$, starting from the rightmost one, then we consider the cycle containing $1$ in $G_4'$. Such a cycle is easily constructed, once one knows $G_4$.
\begin{center}
    \unitlength=2pt
    \begin{picture}(160, 50)(-140,-25)
    \gasset{Nw=6,Nh=6,Nmr=3.0,curvedepth=0}
    \thinlines
    \tiny
  
    \node(N1)(-40,0){$\mathbf{1}$}
    \node(N2)(-50,17.32){$1 \mathbf{0}$}
    \node(N3)(-70,17.32){$2 \mathbf{2}$}
    \node(N4)(-80,0){$2 \mathbf{5}$}
    \node(N5)(-70,-17.32){$1 \mathbf{6}$}
    \node(N6)(-50,-17.32){$\mathbf{4}$}

    \drawedge(N1,N6){}
    \drawedge(N6,N5){}
    \drawedge(N5,N4){}
    \drawedge(N4,N3){}
    \drawedge(N3,N2){}   
    \drawedge(N2,N1){}
\end{picture}
\end{center}
Starting from $1$ we visit, in the order, $4$, $16$, $25$, $22$ and $10$. The sequence of the rightmost digits of such integers, whose first element is $1$ and the last element is $10$,  forms the sequence of the digits of the period of $\frac{1}{39}$ from the right to the left. 
\end{example}

\section{On the fractions $\frac{k}{m}$ with $\gcd(10,m)=1$}\label{km10m1}
We can easily retrieve the digits of the period of a fraction $\frac{k}{m}$, where $\gcd (10,m) =1$ and $1 \leq k < m$, from a fraction having $10  n -1$ as denominator, for some positive integer $n$. Indeed, $u(m) \in \{1, 3, 7, 9 \}$. Therefore, if $u(m)=9$, then $\frac{k}{m} = \frac{k}{10n-1}$ for some positive integer $n$. If, on the contrary, $u(m) \in \{1, 3, 7 \}$, then $\frac{k}{m} = \frac{k'}{m'}$, where 
\begin{eqnarray*}
k' & = & c \cdot k,\\
m' & = & c \cdot m, 
\end{eqnarray*}
being $c$ a positive integer defined as follows:
\begin{itemize}
\item if $u(m) = 1$, then $c=9$;
\item if $u(m) = 3$, then $c=3$;
\item if $u(m)=7$, then $c=7$.
\end{itemize}

\begin{example}
If we want to compute the digits of the period of the decimal representation of $\frac{1}{13}$, then we can rely on the graph $G_4$ (see Example \ref{G_4}). Indeed,  $\frac{1}{13} = \frac{3 \cdot 1}{3 \cdot 13} = \frac{3}{39}$. Therefore, we look at the cycle containing $3$ in $G_4$ and we deduce that $\frac{1}{13} = 0, \overline{076923}$.
\end{example}

\begin{example}
If we want to compute the digits of the period of the decimal representation of $\frac{1}{17}$, then we can rely on the graph $G_{12}$ (see Example \ref{G_12}). Indeed,  $\frac{1}{17} = \frac{7 \cdot 1}{7 \cdot 17} = \frac{7}{119}$. Therefore, we look at the cycle containing $7$ in $G_{12}$ and we deduce that $\frac{1}{17} = 0, \overline{0588235294117647}$.
\end{example}

\section{On the fractions $\frac{k}{m}$ where $m$ is arbitrarily chosen}\label{km_arb}
Let $k$ and $m$ be two positive integers such that $1 \leq k < m$ and
\begin{equation*}
m = 2^a \cdot 5^b \cdot m'
\end{equation*}
for some integer $m'$ coprime to $10$ and some nonnegative integers $a$ and $b$. Define $e = \max (a,b)$. Then,
\begin{equation*}
\frac{k}{m} = \frac{2^{e-a} \cdot 5^{e-b} \cdot k}{10^e \cdot m'} = \frac{1}{10^e} \cdot \frac{2^{e-a} \cdot 5^{e-b} \cdot k}{m'}.
\end{equation*}
If necessary we divide $2^{e-a} \cdot 5^{e-b} \cdot k$ by $m'$ and finally we get
\begin{equation*}
\frac{2^{e-a} \cdot 5^{e-b} \cdot k}{m'} = k' + \frac{k''}{m'},
\end{equation*}
where $k', k'' \in \N, \gcd (m', 10) =1$ and $1 \leq k'' < m'$.

\section{The generalization to the base-$B$ representation}
Let $B$ be an integer such that $B \geq 2$. 

For any positive integer $n$ we define the set
\begin{equation*}
R_{B,n} = \{0, 1, \dots, Bn-2 \}.
\end{equation*}

Consider now the map 
\begin{displaymath}
\begin{array}{rccc}
f_{B,n} : & R_{B,n} & \to & R_{B,n}\\
 & x & \mapsto & r,
\end{array}
\end{displaymath}
where $r$ is the remainder of the Euclidean division of $Bx$ by $Bn-1$, namely the nonnegative integer $r$ such that
\begin{displaymath}
\begin{cases}
B x = q (Bn-1) + r\\
0 \leq r < Bn-1,
\end{cases}
\end{displaymath}
for some integer $q$.

The following holds for the iterates of the map $f_{B,n}$.

\begin{lemma}\label{fBnit}
If $i \in \N$ and $x \in R_{B,n}$, then $f_{B,n}^i(x)$ is equal to the integer $r_i$, being $r_i$ the remainder of the division of $B^ix$ by $Bn-1$, namely
\begin{displaymath}
\begin{cases}
B^i  x = q_i (Bn-1) + r_i\\
0 \leq r_i < Bn-1
\end{cases}
\end{displaymath}
for some integer $q_i$.
\end{lemma}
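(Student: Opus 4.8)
The plan is to mimic exactly the proof of Lemma~\ref{fnit}, replacing $10$ by $B$ throughout; the structure of the argument is completely insensitive to the particular value of the base. I would proceed by induction on $i \in \N$.

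For the base case $i = 0$ I would note that $f_{B,n}^0(x) = x$, that $B^0 x = x$, and that the pair of conditions $x = 0 \cdot (Bn-1) + x$ together with $0 \leq x < Bn-1$ (the latter holding because $x \in R_{B,n}$) exhibit $x$ as the remainder of the division of $B^0 x$ by $Bn-1$, so $r_0 = x$ as required.

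For the inductive step, assuming $i > 0$, I would write $f_{B,n}^i(x) = f_{B,n}(f_{B,n}^{i-1}(x))$ and invoke the inductive hypothesis to get $f_{B,n}^{i-1}(x) = r_{i-1}$ with $B^{i-1} x = q_{i-1}(Bn-1) + r_{i-1}$ and $0 \leq r_{i-1} < Bn-1$ for some integer $q_{i-1}$. Then I would set $f_{B,n}(r_{i-1}) = r$, so that by the definition of $f_{B,n}$ we have $B r_{i-1} = q(Bn-1) + r$ with $0 \leq r < Bn-1$ for some integer $q$. Multiplying the inductive-hypothesis equation by $B$ gives $B r_{i-1} = B^i x - B q_{i-1}(Bn-1)$, and substituting this into the previous display yields $B^i x = (q + B q_{i-1})(Bn-1) + r$ with $0 \leq r < Bn-1$. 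Since $q + B q_{i-1}$ is an integer and $f_{B,n}^i(x) = f_{B,n}(r_{i-1}) = r$, this identifies $f_{B,n}^i(x)$ as the remainder $r_i$ of the division of $B^i x$ by $Bn-1$, completing the induction.

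There is really no obstacle here: the only thing one uses about $B$ is that it is an integer $\geq 2$ (so that $Bn-1 \geq 1$ and the Euclidean division is well defined, and so that $R_{B,n}$ is a nonempty set of representatives of $\Z/(Bn-1)\Z$), and the argument is otherwise a purely formal manipulation of the division identities. The mild care point is simply to carry the quantifiers on the integers $q_i$ correctly — that each is only asserted to exist — but this is already handled by the telescoping of $q$ and $q_{i-1}$ into $q + B q_{i-1}$.
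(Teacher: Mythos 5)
Your proof is correct and is exactly the argument the paper intends: the paper's own proof of Lemma~\ref{fBnit} simply states that one repeats the proof of Lemma~\ref{fnit} with $10$ replaced by $B$, and you have carried out that induction in full, matching it step for step.
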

\begin{proof}
The proof follows the same lines as the proof of Lemma \ref{fnit}, replacing all the occurrences of $10$ by $B$.
\end{proof}

In analogy with Section \ref{k10n1} we can visualize the dynamics of the map $f_{B,n}$ on $R_{B,n}$ by means of a graph $G_{B,n}$, whose vertices are labelled by the elements of $R_{B,n}$ and where an arrow joins a vertex $i$ to a vertex $j$ provided that $j = f_{B,n}(i)$. For any positive divisor $d$ of $Bn-1$ there are $\phi(d)$ elements $i \in R_{B,n}$ such that $d = \frac{Bn-1}{\gcd(Bn-1,i)}$. Such elements give rise to exactly $\frac{\phi(d)}{\ord_d ([B])}$ cycles of length $\ord_d ([B])$. 

Consider now the map
\begin{equation*}
u_B : \N \to \{0, \dots, B-1 \}
\end{equation*}
which takes any nonnegative integer $d_B$, represented in base $B$, to its rightmost figure. 
In analogy with Lemma \ref{uprop}, the following properties hold for the map $u_B$.
\begin{lemma}\label{upropB}
Let $a_B$ and $b_B$ be two positive integers represented in base $B$. Then:
\begin{itemize}
\item $u_B (a_B+b_B) = u_B (u_B(a_B)+u_B(b_B))$;
\item $u_B (a_B \cdot b_B) = u_B (u_B(a_B) \cdot u_B(b_B))$.
\end{itemize}
\end{lemma}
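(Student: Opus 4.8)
\emph{Proof proposal.} The plan is to mimic the proof of Lemma \ref{uprop} verbatim, replacing every occurrence of $10$ by $B$; the point of the generalization is that the only arithmetic fact about $10$ used there is that the rightmost base-$10$ digit of an integer is its residue modulo $10$, and the corresponding statement holds for base $B$ by the very definition of the base-$B$ representation.

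First I would perform the Euclidean division of $a_B$ and $b_B$ by $B$, obtaining integers $a_1, a_0, b_1, b_0$ with
\begin{displaymath}
\begin{cases}
a_B = a_1 \cdot B + a_0,\\
0 \leq a_0 \leq B-1,
\end{cases}
\qquad
\begin{cases}
b_B = b_1 \cdot B + b_0,\\
0 \leq b_0 \leq B-1,
\end{cases}
\end{displaymath}
so that, by definition of $u_B$, we have $u_B(a_B) = a_0$ and $u_B(b_B) = b_0$. For the additive property I would write $a_B + b_B = (a_1+b_1)\cdot B + (a_0+b_0)$ and observe that $u_B\bigl((a_1+b_1)\cdot B\bigr) = 0$ because any multiple of $B$ has rightmost base-$B$ figure equal to $0$; hence adding $(a_1+b_1)\cdot B$ does not affect the rightmost figure of the sum, giving $u_B(a_B+b_B) = u_B(a_0+b_0) = u_B\bigl(u_B(a_B)+u_B(b_B)\bigr)$. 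For the multiplicative property I would expand $a_B \cdot b_B = \bigl(a_1 b_0 + a_0 b_1 + a_1 b_1 \cdot B\bigr)\cdot B + a_0 b_0$ and use the same remark that a multiple of $B$ contributes nothing to the rightmost figure, so $u_B(a_B \cdot b_B) = u_B(a_0 \cdot b_0) = u_B\bigl(u_B(a_B)\cdot u_B(b_B)\bigr)$.

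I do not expect any genuine obstacle here: the statement is a routine transcription, and the only thing worth spelling out carefully is the claim that the last base-$B$ figure of an integer depends only on its residue class modulo $B$ — equivalently, that $u_B(x) \equiv x \pmod{B}$ and $0 \le u_B(x) \le B-1$, which is immediate from the uniqueness of Euclidean division. Everything else is the same bookkeeping as in Lemma \ref{uprop}.
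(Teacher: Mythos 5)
Your proposal is correct and follows exactly the route the paper takes: the paper's proof of this lemma simply states that one repeats the proof of Lemma \ref{uprop} verbatim, replacing every occurrence of $10$, $a$, $b$ by $B$, $a_B$, $b_B$, and your write-up is precisely that transcription, with the Euclidean division by $B$ and the observation that multiples of $B$ do not affect the rightmost base-$B$ figure. Nothing is missing.
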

\begin{proof}
The proof is verbatim the same as the proof of Lemma \ref{uprop} once one replaces all occurrences of $10, a, b$ by $B, a_B$ and $b_B$ correspondingly. 
\end{proof}

We can finally restate Theorem \ref{theo_proc} in the current more general setting. The proof is as in Theorem \ref{theo_proc}, once one replaces all the occurrences of $10$ and $u$ by $B$ and $u_B$ respectively.

\begin{theorem}\label{theo_proc_B}
Let $k$ and $n$ be two positive integers such that $1 \leq k < Bn-1$ and suppose that $ \frac{k}{Bn-1} = 0. \overline{a_1 \dots a_l}_B$, where $l$ is the length of the period of the base-$B$ representation $0. \overline{a_1 \dots a_l}_B$. 

For any $i \in \{1, \dots, l \}$ define the positive integers $\tilde{q}_i, \tilde{r}_i$ such that
\begin{displaymath}
\begin{cases}
B^i \cdot k = \tilde{q}_i (Bn-1) + \tilde{r}_i\\
0 \leq \tilde{r}_i < Bn-1.
\end{cases}
\end{displaymath}
Then, for any $i \in  \{1, \dots, l \}$:
\begin{itemize}
\item $u_B ((\tilde{q}_i)_B) = a_i$;
\item $u_B ((\tilde{q}_i)_B) = u_B ((\tilde{r}_i)_B)$.
\end{itemize} 
\end{theorem}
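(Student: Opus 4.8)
The plan is to transcribe the proof of Theorem \ref{theo_proc} with $10$ replaced by $B$ and $u$ by $u_B$, using Lemma \ref{upropB} in place of Lemma \ref{uprop}. The only genuinely base-specific inputs are two elementary observations about rightmost base-$B$ digits, so I would isolate those first. Since $Bn-1 = B(n-1)+(B-1)$ with $0 \le B-1 < B$, the Euclidean-division characterization of the rightmost digit gives $u_B((Bn-1)_B) = B-1$. Since $B \mid B^i k$ for every $i \ge 1$, and here $i$ ranges over $\{1,\dots,l\}$ with $l \ge 1$, the rightmost base-$B$ digit of $B^i k$ is $0$, i.e. $u_B((B^i k)_B) = 0$. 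I would also record once and for all that $\tilde{r}_i \neq 0$: writing $g = \gcd(k,Bn-1)$ we have $g \le k < Bn-1$, so $\tfrac{Bn-1}{g} > 1$ is coprime to $B$, hence does not divide $B^i$, hence $Bn-1 \nmid B^i k$.

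For the first bullet I would multiply $\frac{k}{Bn-1} = 0.\overline{a_1 \dots a_l}_B$ by $B^i$, which shifts the base-$B$ point $i$ places to the right:
\begin{equation*}
B^i \cdot \frac{k}{Bn-1} = a_1 \dots a_i . \overline{a_{i+1} \dots a_l a_1 \dots a_i}_B .
\end{equation*}
On the other hand, dividing the defining relation $B^i k = \tilde{q}_i(Bn-1) + \tilde{r}_i$ by $Bn-1$ gives $\frac{B^i k}{Bn-1} = \tilde{q}_i + \frac{\tilde{r}_i}{Bn-1}$ with $0 \le \frac{\tilde{r}_i}{Bn-1} < 1$. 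Comparing integer parts yields $\tilde{q}_i = (a_1 \dots a_i)_B$, whose rightmost base-$B$ digit is $a_i$; that is, $u_B((\tilde{q}_i)_B) = a_i$.

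For the second bullet I would apply $u_B$ to both sides of $B^i k = \tilde{q}_i(Bn-1) + \tilde{r}_i$. The left-hand side equals $0$ by the observation above, while, using Lemma \ref{upropB} (which, since $u_B(0)=0$, remains valid for nonnegative arguments) together with $u_B((Bn-1)_B) = B-1$, the right-hand side becomes
\begin{align*}
u_B(\tilde{q}_i(Bn-1) + \tilde{r}_i)
&= u_B\bigl(u_B((\tilde{q}_i)_B)(B-1) + u_B((\tilde{r}_i)_B)\bigr) \\
&= u_B\bigl(u_B((\tilde{q}_i)_B)\,B - u_B((\tilde{q}_i)_B) + u_B((\tilde{r}_i)_B)\bigr) \\
&= u_B\bigl(-u_B((\tilde{q}_i)_B) + u_B((\tilde{r}_i)_B)\bigr).
\end{align*}
Since $u_B((\tilde{q}_i)_B)$ and $u_B((\tilde{r}_i)_B)$ both lie in $\{0,\dots,B-1\}$, their difference lies in $\{-(B-1),\dots,B-1\}$, and $u_B$ of an integer in that range vanishes exactly when the integer is $0$; hence $u_B((\tilde{q}_i)_B) = u_B((\tilde{r}_i)_B)$.

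I do not expect a serious obstacle: the content is a faithful translation of the base-$10$ argument. The only points deserving a line of care are the two base-$B$ digit facts above and the mild abuse in invoking Lemma \ref{upropB} when $\tilde{q}_i = 0$ (which happens precisely when $a_1 = 0$), where the intermediate expression $\tilde{q}_i(Bn-1)$ vanishes — this is harmless because the digit-extraction identities hold trivially for nonnegative integers since $u_B(0)=0$.
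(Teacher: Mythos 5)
Your proposal is correct and is essentially the paper's own proof: the paper proves Theorem \ref{theo_proc_B} precisely by transcribing the argument of Theorem \ref{theo_proc} with $10$ replaced by $B$ and $u$ by $u_B$, which is what you do. Your extra remarks (that $u_B((Bn-1)_B)=B-1$, that $B^ik$ ends in $0$, and the harmless $\tilde{q}_i=0$ case) are sound refinements of the same argument, not a different route.
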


Finally, we consider a generic fraction $\frac{k}{m}$, where $k$ and $m$ are positive integers such that $1 \leq k < m$. Let
\begin{equation*}
B = p_1^{b_1} \dots p_f^{b_f}
\end{equation*}
be the factorization of $B$ in primes of $\N$, for a set $\{p_1, \dots, p_f \}$ of primes of $\N$ and a set $\{b_1, \dots, b_f \} \subseteq \N$. 

Suppose that
\begin{equation*}
m = p_1^{e_1} \dots p_f^{e_f} \cdot m',
\end{equation*}
where:
\begin{itemize}
\item $\{e_1, \dots, e_f \} \subseteq \N$;
\item $m' \in \N$;
\item $\gcd (m', B) = 1$.
\end{itemize}

Let $e$ be a positive integer such that $e \cdot b_i \geq e_i$ for any $i \in \{1, \dots, f \}$. Then,
\begin{equation*}
\frac{k}{m} = \frac{k \cdot p_1^{e b_1 -e_1} \dots p_f^{e b_f-e_f}}{B^e \cdot m'} = \frac{1}{B^{e}} \cdot \frac{k \cdot p_1^{e b_1-e_1} \dots p_f^{e b_f-e_f}}{m'}.
\end{equation*}
If necessary we divide $k \cdot p_1^{e b_1-e_1} \dots p_f^{e b_f-e_f}$ by $m'$. Hence, we can write
\begin{equation*}
\frac{k}{m} = \frac{1}{B^{e}} \cdot \left( k' + \frac{k''}{m'} \right),
\end{equation*}
where $\{k', k'' \} \subseteq \N$ and $1 \leq k'' < m''$.

Since $\gcd (B, m') = 1$, there exist two integers $c$ and $n$ such that 
\begin{equation*}
c m' = B n -1.
\end{equation*}
Moreover, we can choose $c$ and $n$ in such a way that 
\begin{equation*}
1 \leq c < B \text{ and } n > 0.
\end{equation*}
All considered, we get
\begin{equation*}
\frac{k}{m} = \frac{1}{B^{e}} \cdot \left( k' + \frac{c k''}{B n - 1} \right).
\end{equation*}
Summing the base-$B$ representation of the integer $k'$ to  the base-$B$ representation of $\frac{c k''}{B n - 1}$ and shifting the dot by $e$ positions to the left we get the base-$B$ representation of $\frac{k}{m}$.

\begin{example}
In this example we aim at computing the figures of the period of the base-$B$ expansion of $\frac{7}{20}$ for $B=12$. 

First, we notice that $12 = 2^2 \cdot 3$. Therefore, we can also write
\begin{equation*}
\frac{7}{20} = \frac{1}{4} \cdot \frac{7}{5} = \frac{3}{12} \cdot \frac{7}{5} = \frac{1}{B} \cdot \left( 4 + \frac{1}{5} \right).
\end{equation*}
Now, we concentrate on $\frac{1}{5}$. Denote  $m' = 5$. Then,
\begin{equation*}
c m' = B n -1
\end{equation*}
for $c = 7$ and $n = 3$. Hence, we can retrieve the figures of the period of the base-$B$ expansion of
\begin{equation*}
\frac{1}{5} = \frac{7}{35}
\end{equation*}
from the graph $G_{12,3}$, which is here represented (notice that the vertices are labelled using the base-$12$ representation, employing the figures from $0$ to $9$ and the letters $a, b$).
\begin{center}
    \unitlength=2pt
    \begin{picture}(160, 80)(-80,-40)
    \gasset{Nw=6,Nh=6,Nmr=3.0,curvedepth=0}
    \thinlines
    \tiny
  
    \node(M1)(80,0){$1$}
    \node(M2)(75.98,15){$10$}
    \node(M3)(65,25.98){$4$}
    \node(M4)(50,30){$11$}
    \node(M5)(35,25.98){$14$}
    \node(M6)(24.02,15){$15$}
    \node(M7)(20,0){$25$}
    \node(M8)(24.02,-15){$29$}
	\node(M9)(35,-25.98){$b$} 
	\node(M10)(50,-30){$23$}
	\node(M11)(65,-25.98){$9$}
	\node(M12)(75.98,-15){$3$}
	
    \node(N1)(-20,0){$2$}
    \node(N2)(-24.02,15){$20$}
    \node(N3)(-35,25.98){$8$}
    \node(N4)(-50,30){$22$}
    \node(N5)(-65,25.98){$28$}
    \node(N6)(-75.98,15){$2a$}
    \node(N7)(-80,0){$1b$}
    \node(N8)(-75.98,-15){$27$}
	\node(N9)(-65,-25.98){$1a$} 
	\node(N10)(-50,-30){$17$}
	\node(N11)(-35,-25.98){$16$}
	\node(N12)(-24.02,-15){$6$}
        
    \drawedge(M1,M2){}
    \drawedge(M2,M3){}
    \drawedge(M3,M4){}
    \drawedge(M4,M5){}
    \drawedge(M5,M6){}   
    \drawedge(M6,M7){}
	\drawedge(M7,M8){}
	\drawedge(M8,M9){}
	\drawedge(M9,M10){}
	\drawedge(M10,M11){}
	\drawedge(M11,M12){}
	\drawedge(M12,M1){}
	    
    \drawedge(N1,N2){}
    \drawedge(N2,N3){}
    \drawedge(N3,N4){}
    \drawedge(N4,N5){}
    \drawedge(N5,N6){}   
    \drawedge(N6,N7){}
	\drawedge(N7,N8){}
	\drawedge(N8,N9){}
	\drawedge(N9,N10){}
	\drawedge(N10,N11){}
	\drawedge(N11,N12){}
	\drawedge(N12,N1){}
\end{picture}
\end{center}

\begin{center}
    \unitlength=2pt
    \begin{picture}(160, 80)(-80,-40)
    \gasset{Nw=6,Nh=6,Nmr=3.0,curvedepth=0}
    \thinlines
    \tiny
  
    \node(M1)(-20,0){$5$}
    \node(M2)(-30,17.32){$21$}
    \node(M3)(-50,17.32){$18$}
    \node(M4)(-60,0){$26$}
    \node(M5)(-50,-17.32){$a$}
    \node(M6)(-30,-17.32){$13$}

    \node(N1)(60,0){$7$}
    \node(N2)(40,20){$12$}
    \node(N3)(20,0){$24$}
    \node(N4)(40,-20){$19$}
        
    \drawedge(M1,M2){}
    \drawedge(M2,M3){}
    \drawedge(M3,M4){}
    \drawedge(M4,M5){}
    \drawedge(M5,M6){}   
    \drawedge(M6,M1){}
	    
    \drawedge(N1,N2){}
    \drawedge(N2,N3){}
    \drawedge(N3,N4){}
    \drawedge(N4,N1){}
\end{picture}
\end{center}

\begin{center}
    \unitlength=2pt
    \begin{picture}(120, 15)(-60,-15)
    \gasset{Nw=6,Nh=6,Nmr=3.0,curvedepth=0}
    \thinlines
    \tiny
  
    \node(N1)(0,0){$0$}
                 
	\drawloop[loopangle=-90](N1){}
	\end{picture}
\end{center}

We notice that $7_{12}$ belongs to a cycle of length $4$. Therefore, the length of the period of the base-$12$ representation of $\frac{1}{5} = \frac{7}{35}$ is $4$ and
\begin{equation*}
\frac{1}{5} = 0.\overline{2497}_{12}.
\end{equation*} 
Hence,
\begin{equation*}
\frac{7}{20} =  \frac{1}{12} \cdot \left( 4 + \frac{1}{5} \right) = 0.4 \overline{2497}_{12}. 
\end{equation*}
\end{example} 

\bibliography{Refs}

\providecommand{\bysame}{\leavevmode\hbox to3em{\hrulefill}\thinspace}
\providecommand{\MR}{\relax\ifhmode\unskip\space\fi MR }
\providecommand{\MRhref}[2]{%
  \href{http://www.ams.org/mathscinet-getitem?mr=#1}{#2}
}
\providecommand{\href}[2]{#2}
\begin{thebibliography}{1}

\bibitem{kap}
D.~R. Kaprekar, \emph{Some properties of {D}emlo numbers and their applications
  to recurring decimals}, The Math. Student \textbf{7} (June 1939).

\bibitem{rao}
K.~S. Rao, \emph{A note on the recurring period of the reciprocal of an odd
  number}, The American Mathematical Monthly \textbf{62} (Aug. - Sep. 1955),
  no.~7, 484--487.

\end{thebibliography}
\end{document}